\DeclareMathOperator{\MF}{MF}
\DeclareMathOperator{\ind}{ind}
\newtheorem*{thm}{Theorem}
\newtheorem{lem}{Lemma}
\begin{document}
\bibliographystyle{mrl}

\title[The asymptotic density of Wecken maps]{The asymptotic density of Wecken maps on surfaces with boundary}
\author{Seung Won Kim}
\address{Dept. of Mathematics and Applied Statistics, Kyungsung University, Busan 608-736, Republic of Korea}
\email{kimsw@ks.ac.kr} 
\author{P. Christopher Staecker}
\address{Dept. of Mathematics, Fairfield University, Fairfield CT, 06824, USA}
\email{cstaecker@fairfield.edu}
\urladdr{http://faculty.fairfield.edu/cstaecker/}
\keywords{fixed points, surface, asymptotic density, Nielsen theory}
\thanks
{The first author was supported by Basic Science Research Program through the
National Research Foundation of Korea funded by Ministry of Education(NRF-2014R1A1A2058873).}
\subjclass[2010]{55M20, 37C20}

\begin{abstract}
The Nielsen number $N(f)$ is a lower bound for the minimal number of fixed points among maps homotopic to $f$. When these numbers are equal, the map is called Wecken. The paper~\cite{bgms12} by Brimley, Griisser, Miller, and the second author investigates the abundance of Wecken maps on surfaces with boundary, and shows that the set of Wecken maps has nonzero asymptotic density.

We extend the previous results as follows: When the fundamental group is free with rank $n$, we give a lower bound on the density of the Wecken maps which depends on $n$. This lower bound improves on the bounds given in the previous paper, and approaches 1 as $n$ increases. Thus the proportion of Wecken maps approaches 1 for large $n$. In this sense (for large $n$) the known examples of non-Wecken maps represent exceptional, rather than typical, behavior for maps on surfaces with boundary.
\end{abstract}

\maketitle

Given a complex $X$ and a selfmap $f:X\to X$, the Nielsen number $N(f)$ is a  homotopy invariant which satisfies $N(f) \le \MF(f)$, where $\MF(f)$ is the minimal number of fixed points for any mapping in the homotopy class of $f$. Nielsen defined his number in the 1920s as more computable alternative to $\MF(f)$. In the 1940s Wecken \cite{weck41} proved that in fact $N(f) = \MF(f)$ for compact manifolds of dimension not equal to 2. It was a long standing question whether Wecken's theorem holds in dimension 2, and this was finally answered in the 1980s by Jiang \cite{jian84} who gave an example of a map on the pants surface with $N(f)=0$ but $\MF(f)=2$.

Maps on surfaces which satisfy $N(f)=\MF(f)$ are called \emph{Wecken maps}, and this paper continues the work of Brimley, Griisser, Miller, and the second author in \cite{bgms12} investigating the abundance (or lack thereof) of the Wecken maps among the set of all selfmaps on surfaces with boundary.

For maps on compact surfaces with boundary, the numbers $N(f)$ and $\MF(f)$ depend only on the induced homomorphism on the fundamental group, which is a finitely generated free group. When $\phi$ is the induced homomorphism of a Wecken selfmap, we say that $\phi$ is \emph{Wecken}. For each $n\ge 2$, let $W_n$ be the set of all Wecken endomorphisms on the free group of rank $n$. (For $n=1$ the question is uninteresting because all maps on the circle $S^1$ are known to be Wecken.)

We will measure the size of $W_n$ according to its asymptotic density within the set of all endomorphisms. We will review the basic definitions of asymptotic density:
For a finitely generated free group $G$, let $G_p$ be the subset
of all words of $G$ of length at most $p$. The \emph{strict asymptotic density} of a subset $S\subset G$ is defined as 
\[ \rho(S) = \lim_{p \to \infty} \frac{|S \cap G_p|}{|G_p|}, \]
where $|\cdot|$ denotes the cardinality. The set $S$ is said to be
\emph{generic} if $\rho(S) = 1$. The above limit may not exist, in which case we can discuss the related density, which always exists: 
\[ D(S) = \liminf_{p\to\infty} \frac{|S \cap G_p|}{|G_p|}. \]
Of course when $\rho(S)$ does exist we have $D(S)=\rho(S)$.

The density can be thought of as the probability that a random element of $G$ is in the set $S$.

Similarly, if $S \subset G^k$ is a set of $k$-tuples of
elements of $G$, the strict density of $S$ is defined as
\[ \rho(S) = \lim_{p \to \infty} \frac{|S \cap (G_p)^k|}{|(G_p)^k|}, \]
and $S$ is called \emph{generic} if $\rho(S) = 1$. Similarly we define $D(S)$ using $\liminf$ in place of $\lim$.

An endomorphism $\phi:G \to G$ is equivalent combinatorially to a $n$-tuple of elements of $G$
(the $n$ elements are the words $\phi(a_1), \dots, \phi(a_n)$). Thus
the asymptotic density of a set of 
homomorphisms can be defined in the same sense as above, viewing the set of
homomorphisms as a subset of the product set $G^n$. 

Our main result is the following theorem:
\begin{thm}
For $n\ge 2$,
\[ \lim_{n\to\infty} D(W_n) = 1. \]
\end{thm}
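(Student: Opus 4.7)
The plan is to extract from \cite{bgms12} a combinatorial sufficient condition $C_n$ on an endomorphism $\phi:F_n\to F_n$ (encoded by its $n$-tuple of images $w_i := \phi(a_i)$) guaranteeing $\phi\in W_n$, and then to prove that $D(C_n)\to 1$ as $n\to\infty$; since $C_n\subset W_n$ this forces the same limit for $D(W_n)$.

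The sufficient condition will be of remnant type. Each $w_i$ should retain a sufficiently long central subword $\Rem(w_i)$ that cannot be cancelled in any reduction $w_j^\epsilon w_i$ or $w_i w_j^\delta$; more quantitatively, one may require that the cancellation length between $w_i^\epsilon$ and $w_j^\delta$ be strictly less than a threshold $k_0=k_0(n)$ for every relevant $(i,j,\epsilon,\delta)$. Under such a condition, the Wagner-style algorithm used in \cite{bgms12} enumerates the Nielsen classes of the induced surface map, verifies that every essential class has index $\pm 1$, and realizes each class by a single fixed point, so $N(f)=\MF(f)$ and $\phi$ is Wecken.

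To estimate the density of $C_n$, note that for two long independent uniform reduced words $u,v$ on $2n$ generators, the cancellation length in the reduction of $uv$ is stochastically dominated by a geometric variable with ratio $1/(2n-1)$: the probability of cancelling at least $k$ letters is at most $(2n-1)^{-k}$. A union bound over the $O(n^2)$ ordered tuples $(i,j,\epsilon,\delta)$ relevant to $C_n$ bounds the failure probability by $O\!\bigl(n^2(2n-1)^{-k_0}\bigr)$. Choosing $k_0=k_0(n)=\Theta(\log n)$ sends this bound to $0$ as $n\to\infty$, provided we restrict to $n$-tuples in which every $w_i$ has length at least $2k_0(n)$. The complement of this restriction has density zero within $(G_p)^n$ for each fixed $n$ as $p\to\infty$, since $|G_p|$ grows exponentially and words of bounded length form a vanishing proportion.

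Combining the two steps yields $D(W_n)\ge D(C_n)\ge 1-O\!\bigl(n^2(2n-1)^{-k_0(n)}\bigr)$, which tends to $1$. The main obstacle will be the interchange of limits: $D(W_n)$ is defined as $\liminf_{p\to\infty}$ for fixed $n$, so the union-bound estimate on cancellation must hold uniformly in $p$ for all sufficiently large $p$, rather than merely in the limit. This reduces to verifying that the tail of the cancellation distribution between uniform reduced words of length $\le p$ stabilizes quickly as $p$ grows, and carefully accounting for the negligible contribution of short words — an involved but essentially routine exercise in the combinatorics of $F_n$.
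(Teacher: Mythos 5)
Your plan has a genuine gap at its central step: the assertion that a remnant-type condition with bounded cancellation implies that $\phi$ is Wecken. You write that ``under such a condition, the Wagner-style algorithm\ldots verifies that every essential class has index $\pm 1$, and realizes each class by a single fixed point, so $N(f)=\MF(f)$.'' But the remnant condition only guarantees that Wagner's algorithm \emph{computes} $N(f)$; it says nothing about whether $N(f)=\MF(f)$. Passing from a computed Nielsen number to an actual realization by $N(f)$ fixed points is precisely the Wecken question, and it is not automatic. If your threshold $k_0(n)>1$, then your set $C_n$ contains endomorphisms with coincidences among the Wagner tails of some positive length $k<k_0$, and for these you have not shown that the corresponding fixed points can be merged geometrically. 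If instead you strengthen $C_n$ to forbid all Wagner-tail coincidences, you recover essentially the set $V_n$ from \cite{bgms12}, and the known limit is $D(V_n)\to 1/e$, not $1$; so that version of $C_n$ cannot possibly certify $D(W_n)\to 1$. In other words, your condition cannot simultaneously be weak enough to be generic with density tending to $1$ and strong enough to make ``Wecken'' obvious.

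The paper resolves exactly this tension. It partitions (up to a generic set) the remnant endomorphisms into $V_n$, $B_n$, and $\bigcup_{k\ge 1}A_{k,n}$, where $A_{k,n}$ consists of those with a Wagner-tail coincidence of length $k$ and $B_n$ of those with coincidences only among the length-$0$ tails. Lemma~A bounds $D(A_{k,n})\le \frac{3n-2}{2n(2n-1)^k}$ by a counting argument close in spirit to your cancellation estimate (and summing over $k\ge 1$ gives the tail bound you were aiming for, without needing a varying threshold $k_0(n)$). Lemma~B is the substantive new ingredient you are missing: a genuinely topological construction (a local homotopy that ``pulls'' the basepoint into one of the circles) showing that the residual basepoint fixed point can be removed, hence $B_n\subset W_n$. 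Your cancellation estimates and the union bound are fine for the density portion, but without an argument replacing Lemma~B your proof does not go through; you would at best re-derive $D(W_n)\ge 1/e$, which is strictly short of the theorem.
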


Our general approach is based on Wagner's algorithm of \cite{wagn99} for computing the Nielsen number, which we will briefly review. Given an endomorphism $\phi:G \to G$, we first build the set of \emph{Wagner tails}, which are elements of $G$ arising in pairs. This set is constructed as follows: for any $\phi$, the trivial elements $w_0 = 1, \bar w_0 = 1$ are Wagner tails. Also, for each occurrence of the letter $a_i^\epsilon$ (for $\epsilon \in \{+1, -1\}$) in $\phi(a_i)$, write a reduced product $\phi(a_i) = va_i^{\epsilon}\bar v$. Then $w, \bar w $ are Wagner tails, where 
\[ w = \begin{cases} 
v &\text{ if } \epsilon = 1 \\
va_i^{-1} &\text{ if } \epsilon = -1 \end{cases} ~\textrm{ and }~
\bar w = \begin{cases}
\bar v^{-1} &\text{ if } \epsilon = 1 \\
\bar v^{-1}a_i &\text{ if } \epsilon = -1. \end{cases}
\]

Wagner's theorem gives a process for identifying which fixed points of a selfmap can be combined by homotopy based on the Wagner tails of the induced homomorphism. Each pair of Wagner tails corresponds to a fixed point in a particular geometric realization of the homomorphism $\phi$, and Wagner shows that these fixed points can be merged exactly when certain equalities hold among the Wagner tails. 

For our purposes, the following fact is all that we require: if fixed points $x_i$ and $x_j$ correspond to Wagner tails $w_i, \bar w_i$ and $w_j, \bar w_j$ and there is no sequence of Wagner tails $\{w_i,\bar w_i\}=\{w_0,\bar w_0\}, \{w_1,\bar w_1\}, \cdots, \{w_k,\bar w_k\}=\{w_j,\bar w_j\}$ such that $\{w_{\ell-1},\bar w_{\ell-1}\} \cap \{w_\ell,\bar w_\ell\} \neq \emptyset$ for each $\ell\in\{1,2,\cdots, k\}$,  then $x_i$ and $x_j$ cannot be combined by a homotopy.

Wagner's theorem requires that the homomorphism satisfy a ``remnant'' condition, a sort of small-cancellation property for the image words of $\phi$. This remnant condition is satisfied generically, so that $D(S \cap R_n) = D(S)$ for any set of homomorphisms, where $R_n$ is the set of endomorphisms with remnant on the free group of rank $n$.

Following \cite{bgms12}, let $V_n$ be the set of endomorphisms on the rank $n$ free group with remnant\footnote{The set denoted $V_n$ in \cite{bgms12} does not require the remnant property, but this distinction is irrelevant when discussing the density. If we let $V_n'$ be the set from \cite{bgms12} then we have $V_n = V'_n \cap R_n$ and so $D(V_n) = D(V'_n)$ since $R_n$ is generic.} having no equalities among the Wagner tails except for the repeated trivial word $w_0=\bar w_0=1$. This means that $V_n \subset W_n$, and thus $D(V_n) \le D(W_n)$. In \cite{bgms12} it is shown that $D(V_n) > 0$ and it is conjectured that $\lim_{n\to\infty} D(V_n) = 1/e$. Convincing experimental evidence is given to support the conjecture.

Our strategy is to divide the set of all endomorphisms into three disjoint subsets, one of which is $V_n$. 
For the others, let $A_{k,n} \subset G^n$ be the set of endomorphisms with remnant having some equality between a pair of Wagner tails of length $k$. Let $B_n = A_{0,n} - \bigcup_{k\ge 1}A_{k,n}$, and we have 
\begin{equation} R_n = V_n \cup B_n \cup \bigcup_{i=1}^\infty A_{i,n}.\label{unions} 
\end{equation}

The proof of our Theorem follows from two lemmas to be proved:
\begin{lem}\label{Alemma}
For any $k>0$, we have
\[ D(A_{k,n})  \le \frac{3n-2}{2n(2n-1)^k}. \]
\end{lem}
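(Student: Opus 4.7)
My plan is to bound $D(A_{k,n})$ by a Markov-type union bound: the probability that some pair of Wagner tails of length $k$ coincide is at most the expected number of such pairs. I begin by classifying the Wagner tails of length $k$ produced by a single image word $\phi(a_i)$. Each one arises from at most one of four ``slots,'' indexed by $t \in \{1,2,3,4\}$: Case 1 prefix (which forces $\phi(a_i)$ to begin with $w a_i$), Case 2 prefix (which forces $\phi(a_i)$ to begin with $w$, where $w$ ends in $a_i^{-1}$), Case 1 suffix (which forces $\phi(a_i)$ to end with $a_i w^{-1}$), and Case 2 suffix (which forces $\phi(a_i)$ to end with $w^{-1}$, where $w$ ends in $a_i$). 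For each slot $\ell = (i,t)$ and length-$k$ reduced word $w$, let $f_t(w,i)$ be the asymptotic density of endomorphisms whose Wagner tail at $\ell$ equals $w$. Using the standard fact that a specific reduced prefix of length $s$ occurs with density $1/[2n(2n-1)^{s-1}]$ in $G_p$ as $p\to\infty$, one gets $f_t(w,i) = 1/[2n(2n-1)^k]$ for $t\in\{1,3\}$ and $1/[2n(2n-1)^{k-1}]$ for $t\in\{2,4\}$, provided the last letter of $w$ is compatible with the required reducedness (and zero otherwise).

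Next, I apply a union bound over unordered pairs of distinct slots:
\[ D(A_{k,n}) \le \frac{1}{2} \sum_{\ell_1 \ne \ell_2} \sum_w \Pr[\text{slots } \ell_1, \ell_2 \text{ both produce } w]. \]
When the two slots have different indices, the joint probability factors as $f_{t_1}(w,i_1) f_{t_2}(w,i_2)$ by independence of $\phi(a_{i_1})$ and $\phi(a_{i_2})$. When the two slots share an index but have different types, the constraints either contradict each other (e.g.\ when both are prefix-type, the required last letter of $w$ is forced to be both $a_i^{-1}$ and not $a_i^{-1}$) and contribute zero, or they impose a prefix condition and a disjoint suffix condition on a single image word; in the latter case, by Markov-chain mixing of random reduced words the two constraints become asymptotically independent as $p\to\infty$, so the joint again factors as $f_{t_1}(w,i)f_{t_2}(w,i)$ in the density limit. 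Setting $F(w) = \sum_{i,t} f_t(w,i)$, the union bound becomes
\[ D(A_{k,n}) \le \frac{1}{2} \left[ \sum_w F(w)^2 - \sum_{i,t,w} f_t(w,i)^2 \right]. \]

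Finally, I evaluate both sums. A case analysis on the last letter of $w$ (exactly one index $i$ has that letter in $\{a_i, a_i^{-1}\}$, contributing $F_i(w) = 1/(2n-1)^k$, while the other $n-1$ indices each contribute $1/[n(2n-1)^k]$) shows that $F(w) = 1/[n(2n-1)^{k-1}]$ for every length-$k$ word $w$; since there are $2n(2n-1)^{k-1}$ such words, we get $\sum_w F(w)^2 = 2/[n(2n-1)^{k-1}]$. A type-by-type direct computation gives $\sum_{i,t,w} f_t(w,i)^2 = 1/(2n-1)^k$. Substituting and putting over a common denominator:
\[ \frac{1}{2}\!\left[\frac{2}{n(2n-1)^{k-1}} - \frac{1}{(2n-1)^k}\right] = \frac{2(2n-1) - n}{2n(2n-1)^k} = \frac{3n-2}{2n(2n-1)^k}, \]
which is the claimed bound.

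The main obstacle is justifying the asymptotic independence of a specific reduced prefix and a specific disjoint reduced suffix of a random reduced word of length $p$ as $p\to\infty$; this is essentially a Markov-chain mixing result for the reduced-word walk on the free group, but some care is needed to control the finite-$p$ error when passing to the $\liminf$ that defines $D(A_{k,n})$.
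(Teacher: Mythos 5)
Your argument is correct and reaches exactly the paper's bound, but it is organized quite differently. The paper fixes a canonical colliding pair of Wagner tails (WLOG prefix-type tails from $\phi(a_1)$ and $\phi(a_2)$), directly enumerates three compatible configurations, sums their probabilities, and multiplies by $\binom{2n}{2}$, the number of pairs one can form from the at most $2n$ Wagner tails of length $k$. You instead build a uniform first-moment framework: four slot types per generator, explicit single-slot densities $f_t(w,i)$, and the sum-of-squares identity $\tfrac12\bigl[\sum_w F(w)^2 - \sum_{i,t,w} f_t(w,i)^2\bigr]$ for the union bound. I checked your arithmetic: $F(w) = 1/[n(2n-1)^{k-1}]$ for every $k$-word $w$, $\sum_w F(w)^2 = 2/[n(2n-1)^{k-1}]$, $\sum_{i,t,w} f_t(w,i)^2 = 1/(2n-1)^k$, and the final simplification is $(3n-2)/[2n(2n-1)^k]$, as claimed. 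Your organization has the advantage of treating pairs of tails coming from the same image word (one prefix-type $W$-tail, one suffix-type $\bar W$-tail) on an equal footing with pairs from different words; the paper handles only the different-index case explicitly and dispatches the rest with a ``superficial modification'' remark. The cost is the asymptotic-independence claim for a specified prefix and a disjoint specified suffix of a single random reduced word, which you correctly flag as unproven. This is a genuine gap in the writeup, though a small one: it follows from a short transfer-matrix computation. The $2n\times 2n$ adjacency matrix $M$ of allowed consecutive letters ($M_{zw}=1$ iff $z\neq w^{-1}$) has eigenvalues $2n-1,\,1,\,-1$, so $M^{t} = (2n-1)^{t}\,\mathbf{1}\mathbf{1}^{\!\top}/(2n) + O(1)$ in operator norm, and the joint prefix-suffix density converges to the product of the individual densities as the ambient word length grows. (The paper's proof implicitly relies on the same fact for the same-index pairs it does not spell out.) With that lemma supplied, your argument is complete and delivers the lemma.
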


\begin{lem}\label{Blemma}
For $n\ge 2$ we have $B_n \subset W_n$.
\end{lem}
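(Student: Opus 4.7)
The plan is to analyze Wagner's geometric realization of $\phi \in B_n$ and show that its fixed-point configuration admits a homotopy to exactly $N(\phi)$ fixed points. This generalizes the argument $V_n \subset W_n$ used in \cite{bgms12}, extending it to accommodate the length-zero equalities that are permitted in $B_n$ but not in $V_n$.

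The first step is to sort the Wagner tail pairs of $\phi$ into two types: those in which at least one of $W_i,\bar W_i$ equals $1$ (necessarily including the ambient pair $(W_0,\bar W_0)=(1,1)$), and those in which both tails are nontrivial. Since $\phi \in B_n$ forbids any equality among Wagner tails of positive length, each pair of the second type has a tail set $\{W_i,\bar W_i\}$ disjoint from the tail sets of every other Wagner pair (the trivial element $1$ appears in no pair of positive-length type, and different pairs of positive-length type have distinct tails by hypothesis). The cited fact on disjoint tails then implies each such fixed point lies in its own Nielsen class, contributing equally to $N(\phi)$ and to $\MF(\phi)$: retained if essential, removed by homotopy if inessential.

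The heart of the argument is to show that the remaining fixed points, those whose Wagner tail pair contains $1$, can be merged into a single fixed point. Any two such pairs share the element $1$, so their tail sets pairwise intersect; one may then iteratively apply Wagner's merging procedure, using $1$ as a common anchor, to collapse the whole collection to one fixed point (or eliminate them all, if the resulting Nielsen class is inessential). Summing the contributions of the two types then yields $\MF(\phi) = N(\phi)$, establishing $\phi \in W_n$.

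The main obstacle is justifying this merging step: the excerpt cites only the \emph{necessary} direction of Wagner's merging criterion (disjoint tails prevent merging), so completing the \emph{sufficient} direction requires a direct appeal to Wagner's algorithm in \cite{wagn99} to confirm that the shared tail $1$ genuinely permits the iterated merging. Once this is in place the proof is complete, and the inclusion $B_n \subset W_n$ combines with Lemma \ref{Alemma} and the decomposition \eqref{unions} to drive the density of $W_n$ to $1$ as $n \to \infty$.
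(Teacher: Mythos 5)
Your proposal contains a genuine gap, and it is not the one you flag. You identify the obstacle as justifying the merging step (the ``sufficient'' direction of Wagner's criterion), but merging fixed points that lie in a common Nielsen class is the easy part; the paper dispenses with it immediately by replacing Wagner's realization $f$ with the linear realization $f'$, for which the class $\mathbb F$ containing $x_0$ is already the singleton $\{x_0\}$. The serious gap is the parenthetical ``(or eliminate them all, if the resulting Nielsen class is inessential).'' On surfaces this is \emph{exactly} where the Wecken property can fail: Jiang's pants-surface example has $N(f)=0$ but $\MF(f)=2$, so an inessential class cannot in general be removed by homotopy, and nothing in Wagner's algorithm supplies such a removal. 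Treating this as routine begs the entire question the lemma is asking.

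The paper's proof tracks the index of $\mathbb F = \{x_0, x_1, \dots, x_k\}$, namely $\ind\mathbb F = 1-k$. When $k\neq 1$ this index is nonzero, the class is essential, and the linear realization already achieves $N(\phi)$ fixed points, so there is nothing to remove. The only dangerous case is $k=1$, where $\mathbb F$ has index $0$. There the argument is necessarily geometric: the paper constructs an explicit map $h_\epsilon$ homotopic to the identity that ``pulls'' the basepoint into $|a_1|$, and then verifies interval by interval that $\bar f = f'\circ h_\epsilon$ has no fixed points in the $\epsilon$-neighborhood of $x_0$. That hands-on construction is the actual content of the lemma, and your outline omits it entirely. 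Without it, the decomposition into ``pairs containing $1$'' versus ``pairs with both tails nontrivial'' gets you no further than the observation that $N(\phi)$ counts the essential classes; you still have to exhibit a representative of the homotopy class realizing that count, which is precisely what cannot be taken for granted in dimension 2.
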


With these two lemmas, the proof of the Theorem is straightforward:
\begin{proof}[Proof of the Theorem]
Since the sets $V_n \cup B_n$ and $A_{k,n}$ are disjoint for any $k$ and, by Theorem 3.7 of \cite{wagn99}, the remnant property is generic, equation \eqref{unions} gives
\[ 1 = D(V_n \cup B_n) + D(\bigcup_{k=1}^\infty A_{k,n}). \]
The set $V_n$ is constructed so that $V_n \subset W_n$, and also we have $B_n \subset W_n$ by Lemma \ref{Blemma}. Thus $V_n \cup B_n \subset W_n$ and Lemma \ref{Alemma} gives
\begin{align*}
1 &\le D(W_n) + \sum_{k=1}^\infty D(A_{k,n}) \le D(W_n) + \sum_{k=1}^\infty \frac{3n-2}{2n(2n-1)^k} \\
&= D(W_n) + \frac{3n-2}{2n(2n-1)}\frac{1}{1-\frac1{2n-1}} = D(W_n) + \frac{3n-2}{2n(2n-2)}
\end{align*}
Thus
\begin{equation}\label{bound}
D(W_n) \ge 1 - \frac{3n-2}{2n(2n-2)},
\end{equation}
and taking the limit in $n$ gives the result, since $D(W_n) \le 1$ by definition.
\end{proof}

It remains to prove the two lemmas. Lemma \ref{Alemma} uses a purely combinatorial argument. For some fixed $n$, let $\mathcal W(k) = 2n(2n-1)^{k-1}$ be the number of words of length exactly $k$ in the free group on $n$ generators. We will sometimes refer to a subword of length $k$ as a $k$-word.

\begin{proof}[Proof of Lemma \ref{Alemma}]
Choose some endomorphism $\phi\in G_p^n$ at random by generating randomly the image words $\phi(a_i)$, and we will measure the probability that $\phi\in A_{k,n}$. Recall that $\phi \in A_{k,n}$ means that there is an equality among a pair of Wagner tails of length $k$. (More specifically homomorphisms in $A_{k,n}$ have remnant, but we may ignore this condition when computing $D(A_{k,n})$ since the remnant condition is generic.)

Let $f$ be the selfmap on a bouquet of circles whose induced homomorphism is $\phi$ obtained by the geometric realization described by Wagner in \cite{wagn99}. This map has a fixed point at the wedge point plus one fixed point for each Wagner tail pair. Call these fixed points $x_0, x_1, \dots, x_m$, with Wagner tails $w_i$ and $\bar w_i$ associated to each $x_i$. For convenience we set $w_{i+m} = \bar w_i$. For each $i\in \{1,\dots,m\}$, let $l(i) \in \{1,\dots,n\}$ be the ``location'' of $x_i$, that is, the number such that the Wagner tail $w_i$ arises from an occurence of the letter $a_{l(i)}^{\pm 1}$ inside the word $\phi(a_{l(i)})$.

Since $\phi\in A_{k,n}$ there are some distinct $i,j \in \{1,\dots,2m\}$ such that $w_i = w_j$ and $|w_i|=k$. Note that there are at most $2n$ Wagner tails of length $k$. We will assume that $i=1$ and $j=2$, and then multiply our final probability by $2n \choose 2$ to account for this choice. (Since the actual number of Wagner tails of length $k$ is typically less than $2n$, we will obtain an upper bound on the actual density.) The argument that follows will hold for any other choices of $i$ and $j$, though it must be superficially modified if either $i$ or $j$ is greater than $m$.

With these assumptions, and letting $r=l(1)$ and $s=l(2)$, the Wagner tails $w_i$ and $w_j$ will be initial $k$-words of some $\phi(a_r)$ and $\phi(a_s)$. Since $x_1 \neq x_2$ we must have $a_r \neq a_s$. Let us assume without loss of generality that $r=1$ and $s=2$. 

There are three possible ways in which we may obtain $w_1=w_2$ from a randomly chosen map. In all of the following, we assume that $p\gg 2k$ and all written products of words are assumed to be reduced.

One possible case is when $\phi(a_1) = va_1^{-1}u$ for some $v$ of length $k-1$, and $\phi(a_2) = va_1^{-1}a_2w$, so that $w_1 = w_2 = va_1^{-1}$. This restriction on $\phi(a_1)$ requires that the $k$th letter be $a_1^{-1}$, which will occur with probability $\frac{1}{2n}$, while the restriction on $\phi(a_2)$ will occur when all of the first $k+1$ letters of $\phi(a_2)$ are proscribed, which will occur with probability $\frac{1}{\mathcal W(k+1)}$. Thus the total probability for this case is $\frac{1}{2n\mathcal W(k+1)}$.

Another case is when $\phi(a_1) = va_2^{-1}a_1u$ for some $v$ of length $k-1$, and $\phi(a_2) = va_2^{-1}w$, so that $w_1 =w_2= va_2^{-1}$. This restriction on $\phi(a_1)$ requires that the $(k-1)$st and $k$th letters be specified, and thus has probability $\frac1{2n(2n-1)}$ (once the first letter is specified, there are only $2n-1$ remaining choices for the second), while the restriction on $\phi(a_2)$ requires that the initial $k$ letters all be specified, which has probability $\frac{1}{\mathcal W(k)}$. Thus the total probability for this case is 
\[ \frac{1}{2n(2n-1)\mathcal W(k)} = \frac{1}{2n\mathcal W(k+1)}. \]

The final case is when $\phi(a_1) = va_1u$ where $v$ has length $k$ and does not end with $a_2^{-1}$, and $\phi(a_2) = va_2w$, so that $w_1=w_2=v$. This restriction on $\phi(a_1)$ requires that the $k$th letter be $a_1$ and the $(k-1)$st letter be anything but $a_2^{-1}$ or $a_1^{-1}$, which will occur with probability $\frac{1}{2n}(1-\frac{1}{2n-1})$, while the restriction on $\phi(a_2)$ occurs with probability $\frac{1}{\mathcal W(k+1)}$. Thus the total probability for this case is 
\[ \frac{1}{2n}\left(1-\frac{1}{2n-1}\right)\frac{1}{\mathcal W(k+1)} = \frac{n-1}{n(2n-1)} \frac{1}{\mathcal W(k+1)}. \]

The total probability that $\phi \in A_{k,n}$ will equal $\frac{|A_{k,n} \cap G^n_p|}{|G^n_p|}$ when $p$ is sufficiently greater than $k$. Summing the probabilities of the three cases above, along with the factor of $2n \choose 2$, gives
\begin{align*} 
\frac{|A_{k,n} \cap G^n_p|}{|G^n_p|} 
&\le {2n\choose 2} \frac{1}{\mathcal W(k+1)} \left( \frac{1}{2n} + \frac{1}{2n} + \frac{n-1}{n(2n-1)} \right) \\
&= \frac{2n(2n-1)}{2} \frac{1}{\mathcal W(k+1)} \frac{3n-2}{n(2n-1)} 
= \frac{3n-2}{2n(2n-1)^{k}}.
\end{align*}
Taking the lim inf in $p$ gives the result.
\end{proof}

Next we prove Lemma \ref{Blemma}, that all homomorphisms in $B_n$ are Wecken.
This is a topological argument which requires a specific geometric construction on the bouquet of circles. To facilitate this, we will use the following notation:

Let $X$ be the bouquet of $n$ circles whose fundamental group is $\pi_1(X) = \langle a_1, \dots, a_n\rangle$. Let $|a_i|\subset X$ be the circle in $X$ corresponding to the oriented loop $a_i\in \pi_1(X)$. We will parameterize each loop $|a_i|$ as an interval $[0,1]$ with endpoints identified. For a real number $x \in [0,1]$, let $(x)_{a_i} \in |a_i|$ be the corresponding point under this parameterization, so that $(0)_{a_i} = (1)_{a_i} = x_0$, the base point of $X$. For real numbers $x,y \in [0,1]$, let $[x,y]_{a_i}\subset |a_i|$ denote the interval-like segment from $(x)_{a_i}$ to $(y)_{a_i}$. When convenient we will use negative parameters, e.g.\ $[-\epsilon,0]_{a_i} = [1-\epsilon,1]_{a_i}$.

Every homomorphism $\phi:\langle a_1, \dots a_n\rangle$ has a \emph{linear realization} $f:X \to X$ defined as follows: $f$ maps $x_0$ to itself, and maps certain intervals linearly onto circles $|a_i|$. In particular, if the reduced form of $\phi(a_j)$ is $\phi(a_j) = a_{i_1}^{\eta_1} \dots a_{i_k}^{\eta_k}$ for $\eta_l \in \{+1, -1\}$, then $f$ maps the interval $[\frac{l-1}{k}, \frac{l}{k}]_{a_j}$ linearly onto $[0,1]_{a_{i_l}} = |a_{i_l}|$, either preserving or reversing the orientation according to the sign of $\eta_l$. If $k=0$ then $f(|a_j|) = x_0$. 

This linear realization is closely related to Wagner's geometric realization, which additionally makes $f$ constant on a neighborhood of the base point. Wagner's realization is homotopic to the linear realization described above.

We are now ready to prove Lemma \ref{Blemma}:
\begin{proof}[Proof of Lemma \ref{Blemma}]
We will show that $\phi$ is Wecken by explicitly constructing a Wecken map on a bouquet of $n$ circles whose induced homomorphism is $\phi$. 

First we consider Wagner's realization of $\phi$, which we denote by $f$. Since $\phi$ has remnant, by Wagner's theorem the fixed point classes of $f$ are exactly determined by the equalities among the Wagner tails of $\phi$. By our assumption in the theorem, these equalities exist only for trivial Wagner tails, and so all fixed point classes of $f$ are singleton essential classes except perhaps for the class containing the base point $x_0$. 

Let $\mathbb F$ be the fixed point class of $f$ containing $x_0$, say $\mathbb F = \{ x_0, x_{1}, \dots, x_{k}\}$, where each $x_i$ for $i>0$ is a fixed point of index $-1$ and $x_0$ has index $1$. When we change $f$ by homotopy to the linear realization $f'$, this fixed point class will become $\mathbb F' = \{x_0\}$. Since the index of a fixed point class is homotopy invariant, we will have $\ind \mathbb F' = \ind \mathbb F = 1-k$. 

If $k > 1$ then $\mathbb F'$ is an essential fixed point class consisting of one point, and so $f'$ (and thus $\phi$) is Wecken as desired. 
It remains to consider the case where $k=1$, in which case we must show that the fixed point of $f'$ at $x_0$ can be removed by a homotopy. Without loss of generality let us assume that $x_1 \in |a_1|$, which means that the word $\phi(a_1)$ begins with $a_1$ and no other words $\phi(a_i)$ begin or end with $a_i$. The case in which $\phi(a_1)$ ends (rather than begins) with $a_1$ uses an analogous argument.

\begin{figure}
\def\svgwidth{6in}
\begin{center}
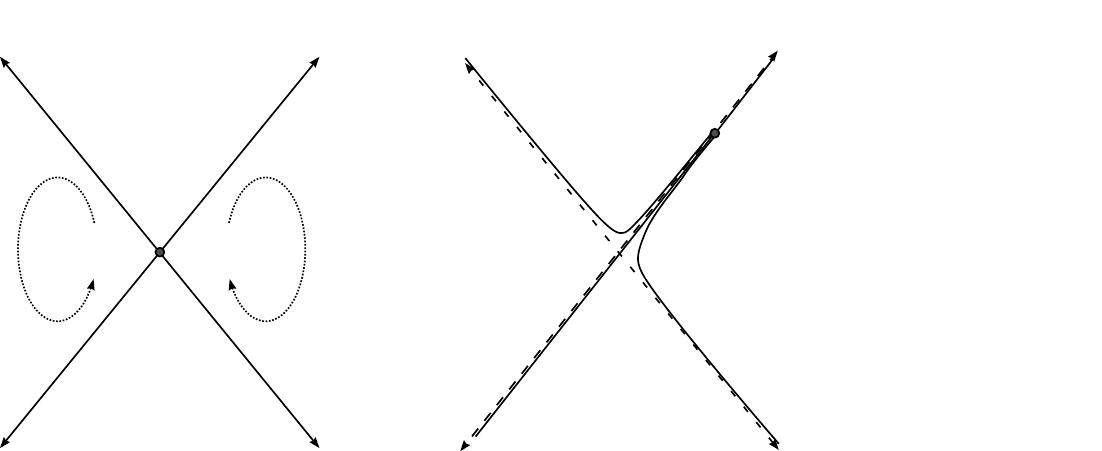
\end{center}
\caption{Action of $h_\epsilon$ in a neighborhood of the base point for $n=2$\label{pullpic}}
\end{figure}

In order to remove the fixed point, we choose some small $\epsilon$ and we will construct a map $h_\epsilon: X\to X$ which is homotopic to the identity. Let $U_\epsilon$ be the $\epsilon$-neighborhood of $x_0$ with respect to the linear parameterization of $X$ given above. We define $h_\epsilon$ as follows: $h_\epsilon$ is the identity map outside of $U_\epsilon$. Informally, the effect of $h$ is to ``pull'' the basepoint $x_0$ to the point $(\epsilon/2)_{a_1}$ as in Figure \ref{pullpic}.
Specifically, inside $U_\epsilon$, the map $h_\epsilon$ carries intervals linearly as follows:
\begin{align*}
h_{\epsilon}([0,\epsilon]_{a_1}) &= [\epsilon/2, \epsilon]_{a_1}, \\
h_{\epsilon}([0,\epsilon/2]_{a_i}) &= [\epsilon/2,0]_{a_1} \quad \text{for }i \neq 1, \\
h_{\epsilon}([\epsilon/2, \epsilon]_{a_i}) &= [0, \epsilon]_{a_i} \quad \text{for }i \neq 1, \\
h_{\epsilon}([-\epsilon, -\epsilon/2]_{a_i}) &= [-\epsilon, 0]_{a_i} \quad \text{for all }i, \\
h_{\epsilon}([-\epsilon/2, 0]_{a_i}) &= [0,\epsilon/2]_{a_1} \quad \text{for all }i.
\end{align*} 
Image intervals written $[x,y]_*$ with $x<y$ indicate that the map is constructed to be orientation preserving, while $y<x$ indicates that the map is orientation reversing.

Varying the value of $\epsilon$ shows that $h_0$ is the identity and thus that $h_\epsilon$ is homotopic to the identity on $X$. Now let $\bar f = f' \circ h_\epsilon$, so $\bar f$ is homotopic to $f$ and these maps agree outside of $U_\epsilon$. We now claim that $\bar f$ has no fixed points on $U_\epsilon$, and thus that the fixed point of $f'$ at $x_0$ has been removed.

It suffices to check that $\bar f$ is fixed-point free on various intervals in $U_\epsilon$. For example observe that 
\[ \bar f([0,\epsilon]_{a_1}) = f'([\epsilon/2, \epsilon]_{a_1}) = [\delta/2, \delta]_{a_1}, \]
for some $\delta > \epsilon$. (The map $f'$ is linear with ``slope'' greater than 1.) Since $\bar f$ maps $[0,\epsilon]_{a_1}$ linearly to $[\delta/2, \delta]_{a_1}$ with $\delta > \epsilon$, there are no fixed points of $\bar f$ in $[0,\epsilon]_{a_1}$. A similar argument shows that $\bar f$ has no fixed points on $[\epsilon/2, \epsilon]_{a_i}$ for $i\neq 1$ or $[-\epsilon, -\epsilon/2]_{a_i}]$ for any $i$.

Next we consider $[0,\epsilon/2]_{a_i}$ for $i\neq 1$. We have $\bar f([0,\epsilon/2]_{a_i}) = f'([\epsilon/2,0]_{a_1})$ which is an initial segment of $|a_1|$ with orientation reversed, and since $i\neq 1$ there are no fixed points of $\bar f$ here (the base point is not fixed since the orientation of the intervals is reversed). Similar arguments show that $\bar f$ has no fixed points on $[-\epsilon/2, 0]_{a_i}$ for any $i$.
\end{proof}

We note in conclusion that the arguments in our main theorem can be used to prove the conjecture of \cite{bgms12} that $\lim_{n\to\infty} D(V_n) = 1/e$. Using counting arguments similar to those in \cite{bgms12}, it is not difficult to show that $D(A_{0,n}) = 1- 1/e$. (A homomorphism $\phi$ is in $A_{0,n}$ if and only if at least one word $\phi(a_i)$ begins or ends with $a_i$. The required counting argument is essentially contained in \cite[Theorem 11]{bgms12}.) The proof of our main theorem established that $\lim_{n\to\infty} D(V_n \cup B_n) = 1$, and thus we have
\[ 1 \le \lim_{n\to\infty} D(V_n) + D(B_n) \le \lim_{n\to\infty} D(V_n) + D(A_{0,n}) = \lim_{n\to\infty} D(V_n) + 1 - 1/e, \]
and thus $\lim_{n\to\infty} D(V_n) \ge 1/e$. But Theorem 11 of \cite{bgms12} shows that $\lim_{n\to\infty} D(V_n) \le 1/e$ and the conjecture is proved. This argument also demonstrates that $\lim_{n\to\infty} D(B_n) = 1 - 1/e$.

We also note that the proven convergence of $D(W_n)$ to 1 by \eqref{bound} is somewhat slow. The values on the right side of \eqref{bound} do not exceed 0.9 until $n=8$, and do not exceed $0.99$ until $n=76$. It seems plausible that the convergence is actually faster than this, perhaps exponentially fast. In fact it remains an open question whether $D(w_n) = 1$ for all (or any) $n$. 

We have made no effort to compute the strict density $\rho(W_n)$, which would require showing that the limit (rather than lim inf) of the ratios $\frac{|W_{n} \cap G^n_p|}{|G^n_p|}$ exists. This does not seem accessible to our methods, since we work always with subsets of $W_n$ and never obtain upper bounds. In an effort to clear up some details unaddressed in \cite{bgms12}, we have been able to show that $\rho(V_n)$ exists for each $n$, and we prove this in the appendix.

\section*{Appendix: Existence of $\rho(V_n)$}
In this brief appendix we fill in a detail which was untreated in \cite{bgms12}, namely the existence of the strict density $\rho(V_n)$ for each $n\ge 2$. Let $V'_n$ be the set of endomorphisms with no equalities among the Wagner tails except for $w_0=\bar w_0 = 1$. The set $V_n$ additionally requires the remnant property, so that $V_n = V'_n \cap R_n$. Since $\rho(R_n)=1$, if $\rho(V'_n)$ exists then $\rho(V_n)$ will exist and we will have $\rho(V_n) = \rho(V'_n)$.

It suffices then to show that $\rho(V'_n)$ exists. (In fact the set called $V_n$ in \cite{bgms12} is exactly $V'_n$.) Let $\beta_p=\frac{|V'_n \cap G_p^n|}{|G_p^n|}$. We will show that the
sequence $\langle \beta_p\rangle$ is monotone decreasing, which is vaguely suggested by the experimental data presented in \cite{bgms12}. Thus
\[ \rho(V'_n) = \lim_{p \to \infty} \beta_p \]
exists.

We make use of general counting formulas used throughout \cite{bgms12}, most importantly 
\[ |G_p| = \sum_{k=1}^p \mathcal W(k) = \frac{n(2n-1)^p -1}{n-1} \]

Let $\mathcal S(k,p)=|G_p - G_{k-1}|$ be the number of
words of length between $k$ and $p$. Then \[\mathcal
S(k,p)=\frac{n(2n-1)^p -1}{n-1} - \frac{n(2n-1)^{k-1}
-1}{n-1}=\frac{n(2n-1)^{k-1}((2n-1)^{p-k+1}-1)}{n-1}.\]
Let $\mathcal U(k,p) =\frac{\mathcal S(k,p)}{|G_p|}$ be the probability of choosing a word with length between $k$ and $p$ when we choose a word in $G_p$ at random. Then we have
\begin{align*}
\mathcal U(k,p) &=\frac{\mathcal
S(k,p)}{|G_p|}=\frac{n(2n-1)^{k-1}((2n-1)^{p-k+1}-1)}{n-1}\cdot\frac{n-1}{n(2n-1)^p
-1} \\
&=\frac{n(2n-1)^{k-1}((2n-1)^{p-k+1}-1)}{n(2n-1)^p -1}
\end{align*}

Now we are ready for our result.

\begin{thm}
The sequence $\langle \beta_p\rangle$ is monotone decreasing.
\end{thm}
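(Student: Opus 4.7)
The plan is to reduce the problem to a coordinatewise monotonicity statement by conditioning on the word-length profile $(k_1, \dots, k_n) = (|\phi(a_1)|, \dots, |\phi(a_n)|)$. A uniformly random $\phi \in G_p^n$ corresponds to $n$ independent uniformly random elements of $G_p$, and conditional on the length of each coordinate, the distribution is uniform over the reduced words of the specified length and does not depend on $p$. Hence
\[ x_p \;=\; \sum_{(k_1,\dots,k_n) \in \{0,\dots,p\}^n} \left(\prod_{i=1}^n \frac{\mathcal W(k_i)}{|G_p|}\right) g(k_1, \dots, k_n), \]
where $g(k_1, \dots, k_n)$ is the probability that $\phi \in V'_n$ under independent uniform draws of each $\phi(a_i)$ from the reduced words of length $k_i$. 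Crucially, $g$ depends only on the length profile, not on $p$.

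The central claim I would establish is that $g$ is non-increasing in each $k_i$. Granting this, monotonicity of $x_p$ follows from a stochastic-dominance argument. For any $k \le p$,
\[ \frac{|G_k|}{|G_{p+1}|} \;<\; \frac{|G_k|}{|G_p|}, \]
so the one-dimensional length distribution at level $p+1$ stochastically dominates the one at level $p$; by independence across the $n$ coordinates, the product law at level $p+1$ stochastically dominates the one at level $p$ in the componentwise partial order. Since $g$ is coordinatewise non-increasing, a standard monotone coupling then yields $x_{p+1} \le x_p$.

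The main obstacle is establishing the coordinatewise monotonicity of $g$. Fixing all other lengths and writing $w_{-i}$ for some choice of the remaining words, I would use the bijection that identifies a reduced word of length $k+1$ with a pair $(w', c)$, where $w'$ is a reduced word of length $k$ and $c$ is one of the $2n-1$ letters that does not cancel the last letter of $w'$. Under this bijection the required monotonicity rephrases as the aggregate inequality
\[ \sum_{|w'|=k} \bigl|\{c : (w'c, w_{-i}) \in V'_n\}\bigr| \;\le\; (2n-1)\,\bigl|\{w' : |w'|=k \text{ and } (w', w_{-i}) \in V'_n\}\bigr|. \]
This is genuinely not a pointwise inequality: extending a bad $w'$ to a good $w'c$ can happen, because extension prepends $c^{-1}$ to every right Wagner tail arising from $\phi(a_i)$ and can thereby break an existing coincidence with a Wagner tail from some $\phi(a_j)$ with $j \ne i$. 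Showing that these ``repairs'' are outweighed by the new coincidences introduced by extension---either via the new Wagner-tail pair created when $c = a_i^{\pm 1}$, or via the shifted right Wagner tails from $\phi(a_i)$ accidentally matching existing Wagner tails---is the delicate combinatorial heart of the proof, and is where most of the work lies.
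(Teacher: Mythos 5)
Your proposal takes a genuinely different route from the paper, but it leaves a gap at its center which you have yourself identified without filling. You condition on the length profile $(k_1,\dots,k_n)$, note that the conditional law of $\phi$ given lengths is $p$-independent, and reduce $x_p$ to the expectation of a function $g$ of the length profile under a product measure. The stochastic-dominance step is sound: the length law on $G_{p+1}$ stochastically dominates that on $G_p$, the product law inherits componentwise dominance, and a coordinatewise non-increasing $g$ would then yield $x_{p+1}\le x_p$. But the whole argument now hinges on the claim that $g$ is coordinatewise non-increasing, and that claim is asserted, not proved. As you correctly observe, this cannot be checked pointwise under the extension map $w'\mapsto w'c$: appending $c$ prepends $c^{-1}$ to every right Wagner tail from $\phi(a_i)$, which can \emph{repair} an existing coincidence just as readily as create one, and when $c=a_i^{\pm 1}$ a brand-new Wagner tail pair appears. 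Establishing the aggregate inequality you write down is a substantive combinatorial problem, arguably at least as hard as the original theorem. It is also not self-evident that the claim is even true in full generality; your proposal contains no argument, heuristic or otherwise, that the ``new coincidences'' outweigh the ``repairs.''

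By contrast, the paper sidesteps any hidden monotonicity of a conditional probability. It works with the complement $V_n^{\prime c}$, decomposes the bad event into structural types indexed by the coincidence length $k$ (and the three patterns of Lemma~\ref{Alemma}), writes the probability of each type explicitly as
$a_p = \frac{\mathcal U(k,p)}{2n}\cdot\frac{\mathcal U(k+1,p)}{\mathcal W(k+1)}$,
and then verifies the single elementary inequality $\mathcal U(k,p)\le\mathcal U(k,p+1)$ by direct algebra. Everything is reduced to an explicit rational-function computation, and no structural claim about $g$ is needed. Your framing is conceptually attractive and would give a cleaner proof if the monotonicity of $g$ could be shown, but as submitted the hard work has been deferred rather than done, so the proof is incomplete.
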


\newcommand\inv{^{-1}}

\begin{proof}
Let $V_n^{\prime c}$ be the complement of $V'_n$, and for each $p$ we will show that $\frac{|V_n^{\prime c} \cap
G_p^n|}{|G_p^n|}\leq \frac{|V_n^{\prime c} \cap G_{p+1}^n|}{|G_{p+1}^n|}$.
Then we have $\frac{|V'_n \cap G_p^n|}{|G_p^n|}\geq \frac{|V'_n \cap
G_{p+1}^n|}{|G_{p+1}^n|}$ and thus $\langle \beta_p\rangle$ is monotone
decreasing.

We only check the subset of $V'^c_n$ consisting of maps $\phi$ with $\phi(a_1)=va_1\inv u$ for some word $v$
of length $k-1$ and $\phi(a_2)=va_1\inv a_2w$. Similar arguments will suffice to show that the proportion of other maps in $V_n^c$ is increasing. Let the proportion of such maps in $G_p^n$ be $\gamma_p$, and we will show that $\langle \gamma_p \rangle$ is increasing.

In $G_p^n$, since $|\phi(a_1)|\geq k$, word $\phi(a_1)$ will occur
with probability $\mathcal U(k,p)$. The restriction on $\phi(a_1)$
requires that the $k$th letter be $a_1\inv$, which will occur with
probability $\frac{1}{2n}$. Meanwhile, since $|\phi(a_2)|\geq k+1$,
word $\phi(a_2)$ will occur in $G_p^n$ with probability $\mathcal
U(k+1,p)$. The restriction on $\phi(a_2)$ requires that the initial
$k+1$ letters be specified, which will occur with probability
$\frac{1}{\mathcal W(k+1)}$. Thus the total probability for this
case in $G_p^n$ is
$$
\gamma_p = \frac{\mathcal U(k,p)}{2n}\cdot\frac{\mathcal U(k+1,p)}{\mathcal
W(k+1)}.
$$
Similarly, for the same fixed $k$, the probability of occurrence of
this case in $G_{p+1}^n$ is
$$
\gamma_{p+1} = \frac{\mathcal U(k,p+1)}{2n}\cdot\frac{\mathcal U(k+1,p+1)}{\mathcal
W(k+1)}.
$$

Now we have
\begin{align*}
\gamma_p &= \frac{\mathcal U(k,p)}{2n} \cdot \frac{\mathcal U(k+1,p)}{\mathcal W(k+1)} \\
&= \frac{\mathcal U(k,p)}{\mathcal U(k,p+1)} \cdot \frac{\mathcal U(k+1,p)}{\mathcal U(k+1,p+1)} \cdot \frac{\mathcal U(k,p+1)}{2n} \cdot \frac{\mathcal U(k+1,p+1)}{\mathcal W(k+1)}  \\
&= \frac{\mathcal U(k,p)}{\mathcal U(k,p+1)} \cdot \frac{\mathcal U(k+1,p)}{\mathcal U(k+1,p+1)} \cdot \gamma_{p+1}
\end{align*}


We will show that $\frac{\mathcal U(k,p)}{\mathcal U(k,p+1)}\leq 1$
for each $k$ with $k\leq p$. In this case the first two factors above are at most 1, and we will have shown that $\gamma_p \le \gamma_{p+1}$. We have
\begin{align*}
\frac{\mathcal U(k,p)}{\mathcal U(k,p+1)}
&=\frac{n(2n-1)^{p+1} -1}{n(2n-1)^{k-1}((2n-1)^{p-k+2}-1)}\cdot
\frac{n(2n-1)^{k-1}((2n-1)^{p-k+1}-1)}{n(2n-1)^p -1}\\
&=\frac{(n(2n-1)^{p+1}
-1)((2n-1)^{p-k+1}-1)}{((2n-1)^{p-k+2}-1)(n(2n-1)^p -1)}\\
&=\frac{n(2n-1)^{2p-k+2}-n(2n-1)^{p+1}-(2n-1)^{p-k+1}+1}
{n(2n-1)^{2p-k+2}-(2n-1)^{p-k+2}-n(2n-1)^p+1}\\
&=\frac{n(2n-1)^{2p-k+2}+1-(2n-1)^{p-k+1}(n(2n-1)^k
+1)}{n(2n-1)^{2p-k+2}+1-(2n-1)^{p-k+1}((2n-1)+n(2n-1)^{k-1})}\\
&\leq 1, \end{align*}
where the inequality holds because  $(2n-1)+n(2n-1)^{k-1}\leq n(2n-1)^k
+1$.

\end{proof}

\end{document}